\theoremstyle{plain}
\newtheorem{thm}{Theorem}[section]
\newtheorem{lem}[thm]{Lemma}
\theoremstyle{definition}
\newtheorem{defn}[thm]{Definition}
\newtheorem{ques}[thm]{Question}
\theoremstyle{remark}
\newtheorem{rem}[thm]{Remark}
\begin{document}

\title{An example of weakly amenable and character amenable operator}
\title{An example of weakly amenable and character amenable operator}
\author[L. Y. Shi]{Luo Yi Shi}

\address{Department of Mathematics\\Tianjin Polytechnic University\\Tianjin 300160\\P.R. CHINA}

\email{sluoyi@yahoo.cn}
\author[Y. J. Wu]{YU Jing Wu}

\address{Tianjin Vocational Institute \\Tianjin 300160\\P.R. CHINA}

\email{wuyujing111@yahoo.cn}
\author[Y.Q. Ji]{You Qing Ji}

\address{Department of Mathematics\\Jilin University\\Changchun 130012\\P.R. CHINA}

\email{jiyq@jlu.edu.cn}

\thanks{Supported by NCET(040296), NNSF of China(10971079) and
        the Specialized Research Fund for the Doctoral Program
        of Higher Education(20050183002)}

\date{June 15 2010}

 \subjclass[2000]{47C05 (46H35 47A65 47A66 47B15)}

\keywords{Amenable; weakly amenable}

\begin{abstract}
A complete characterization of Hilbert space operators that generate weakly amenable algebras remains open, even
in the case of compact operator.  Farenick, Forrest and Marcoux proposed the question that if $T$ is a compact
weakly amenable operator on a Hilbert space $\mathfrak{H}$, then is $T$ similar to a normal operator?
 In this
paper we demonstrate an example of compact triangular operator on infinite-dimension Hilbert space which is a
weakly amenable and character amenable operator but is not similar to a normal operator.

\end{abstract}

\maketitle

\section{Introduction}

Let $\mathfrak{A}$ be a Banach algebra, and let $X$ be a Banach
$\mathfrak{A}$-bimodule. A {\it derivation} $D :
\mathfrak{A}\rightarrow X$ is a continuous linear map such that
$D(ab) = a \cdot D(b) + D(a)\cdot b$, for all $a, b \in
\mathfrak{A}$. A  derivation $D : \mathfrak{A}\rightarrow X$ is said
to be {\it inner} if there exists $x\in X$ such that $D(a) = a\cdot
x- x \cdot a$ for all $a\in \mathfrak{A}$. A Banach
$\mathfrak{A}$-bimodule $X$ is said to be {\it commutative} if
$a\cdot x=x\cdot a$ for each $a\in \mathfrak{A}, x\in X$. For any
Banach $\mathfrak{A}$-bimodule $X$, its dual $X^*$ is naturally
equipped with a  Banach $\mathfrak{A}$-bimodule structure via $ [a
\cdot f](x)=f(x \cdot a), [f \cdot a](x)=f(a \cdot x), a\in
\mathfrak{A}, x\in X, f\in X^*.$

We can now give the definition of  amenability, weak amenability and character amenability for Banach algebra:

\begin{defn}
A Banach algebra $\mathfrak{A}$ is {\it amenable} if , for each
Banach $\mathfrak{A}$-bimodules $X$, every derivation  $D :
\mathfrak{A}\rightarrow X^*$ is inner.
\end{defn}

\begin{defn}
A commutative Banach algebra $\mathfrak{A}$ is {\it weakly amenable}
if , for each commutative Banach $\mathfrak{A}$-bimodules $X$, every
derivation  $D : \mathfrak{A}\rightarrow X$ is inner.
\end{defn}

 Let $\mathfrak{A}$ be a Banach algebra
and $\sigma(\mathfrak{A})$  the spectrum of $\mathfrak{A}$, that is, the set of all non-zero multiplicative
linear functionals on $\mathfrak{A}$. If $\varphi \in \sigma(\mathfrak{A})\cup\{0\}$ and
 if $X$ is a Banach space, then $X$ can be viewed as left or
right Banach  $\mathfrak{A}$-module by the following actions. For $a\in \mathfrak{A}, x\in X$:
$$ a\cdot x = \varphi(a)x,\eqno(2.1)$$
$$ x\cdot a =\varphi(a)x.\eqno(2.2)$$
If the left action of $\mathfrak{A}$ on $X$ is given by (2.1), then it is easily verified that the right action
of $\mathfrak{A}$ on the dual $\mathfrak{A}$-module $X^*$ is given by $f\cdot a = \varphi(a) f,$ for all $f\in
X^*, a\in \mathfrak{A}$. Throughout, by  {\it $(\varphi, \mathfrak{A})$-bimodule} $X$, we mean that $X$ is a
Banach $\mathfrak{A}$-bimodule for which the left module action is given by (2.1). {\it $(\mathfrak{A},
\varphi)$-bimodule} is defined similarly by (2.2). Let $\varphi \in \sigma(\mathfrak{A})\cup\{0\}$, a Banach
algebra $\mathfrak{A}$ is said to be {\it left $\varphi$
 amenable}, if every derivation $D$ from $\mathfrak{A}$ into the dual
$A$-bimodule $X^*$ is inner for all  $(\varphi, A)$-bimodules $X$; $\mathfrak{A}$ is said to be {\it right
$\varphi$
 amenable}, if every derivation $D$ from $\mathfrak{A}$ into the dual
$A$-bimodule $X^*$ is inner for all  $(A, \varphi)$-bimodules $X$. $\mathfrak{A}$ is said to be {\it left
character
 amenable}, if it is left $\varphi$ amenable for all
 $\varphi \in \sigma(\mathfrak{A})\cup\{0\}$; $\mathfrak{A}$ is said to be {\it right character
 amenable}, if it is right $\varphi$ amenable for all
 $\varphi \in \sigma(\mathfrak{A})\cup\{0\}$.

\begin{defn}
A  Banach algebra $\mathfrak{A}$ is said to be {\it character
 amenable}, if it is both left character
 amenable and right character amenable.
\end{defn}

The concept of amenable Banach algebras was first introduced by B. E. Johnson in \cite{BE1972}. Weak amenability
was first defined by Bade, Curtis and Dales in \cite{W1987,H2000}. Character amenability was first defined by
Sangani-Monfared \cite{M}. Ever since its introduction, the concepts of amenability, weak amenability and
character amenability have played an important role in research in Banach algebras, operator algebras and
harmonic analysis. We only would like to mention the following deep results due to Willis \cite{W1995} and
Farenick, Forrest and Marcoux \cite{F2005}, \cite{F2007}:

Given a complex separable infinite-dimensional Hilbert space
$\mathfrak{H}$, we write $\mathfrak{B}(\mathfrak{H})$ for the
bounded linear operators on $\mathfrak{H}$. If $T\in
\mathfrak{B}(\mathfrak{H})$, denote the norm-closure of span$\{T^k:
k\in\mathbb{N}\}$ by $\mathfrak{A}_T$, where $\mathbb{N}$ is the set
of natural numbers. $T$ is said to
 be {\it amenable (weakly
amenable or character amenable)} if $\mathfrak{A}_T$ is amenable
(respectively, weakly amenable, character amenable).

In \cite{W1995}, Willis showed that:

\begin{thm}
Suppose $T$ is a compact amenable operator, then $T$ is similar to a
normal operator.
\end{thm}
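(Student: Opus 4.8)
The plan is to build the similarity directly from the Riesz spectral decomposition of the compact operator $T$, extracting the needed structural facts from the amenability of $\mathfrak{A}_T$. Since $T$ is compact, $\sigma(T)=\{0\}\cup\{\lambda_n:n\ge 1\}$ with the nonzero eigenvalues $\lambda_n$ isolated and accumulating only at $0$. For each $n$ I take the Riesz idempotent $E_n=\frac{1}{2\pi i}\oint_{\Gamma_n}(z-T)^{-1}\,dz$ associated with $\lambda_n$; because $\lambda_n$ is isolated and $0\in\sigma(T)$, $E_n$ is a norm limit of polynomials in $T$ with vanishing constant term, so $E_n\in\mathfrak{A}_T$. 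The $E_n$ are mutually orthogonal finite-rank idempotents and each generalized eigenspace $E_n\mathfrak{H}$ is $T$-invariant. I would first record that $T$ is similar to a normal operator once three facts are in hand: (i) $T|_{E_n\mathfrak{H}}=\lambda_n I$ for every $n$; (ii) $\sup\{\|\sum_{n\in F}E_n\|:F\subseteq\mathbb{N}\ \text{finite}\}<\infty$; and (iii) $T$ carries no nonzero quasinilpotent summand on the closed complement of $\overline{\bigoplus_n E_n\mathfrak{H}}$.

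The facts (i) and (iii) I would deduce from the heredity of amenability under continuous homomorphisms with dense range. For (i), the restriction map $a\mapsto a|_{E_n\mathfrak{H}}$ carries $\mathfrak{A}_T$ onto the finite-dimensional commutative subalgebra of $\mathfrak{B}(E_n\mathfrak{H})$ generated by $T|_{E_n\mathfrak{H}}$, which is therefore amenable; a finite-dimensional commutative amenable Banach algebra is semisimple and so has no nonzero nilpotent, forcing $T|_{E_n\mathfrak{H}}=\lambda_n I$. For (iii), the compression of $\mathfrak{A}_T$ to the complementary invariant subspace is generated by a compact quasinilpotent operator, hence is a radical Banach algebra; since a nonzero radical Banach algebra cannot be amenable, this summand must be $0$.

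Fact (ii) carries the genuine weight of the amenability hypothesis, and I expect it to be the main obstacle. Amenability supplies a bounded approximate diagonal $(m_\alpha)\subset\mathfrak{A}_T\hat\otimes\mathfrak{A}_T$ with $\sup_\alpha\|m_\alpha\|<\infty$. The idea is to feed this diagonal into the tautological action of $\mathfrak{A}_T$ on $\mathfrak{H}$ and average, producing a single bounded invertible operator under which every $E_n$ becomes self-adjoint; equivalently, the obstruction to the uniform bound is a bounded cocycle assembled from the family $\{E_n\}$, and amenability renders it a coboundary. The delicate point is to convert this abstract innerness into the numerical estimate that controls all the finite partial sums $\sum_{n\in F}E_n$ simultaneously and uniformly in $F$; this quantitative passage from the virtual diagonal to a uniform bound on idempotents is the crux of the argument.

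Granting (i), (ii) and (iii), I would finish by invoking the classical renorming principle: a uniformly bounded Boolean algebra of idempotents on a Hilbert space is similar to a family of mutually orthogonal self-adjoint projections. Thus (ii) yields a bounded invertible $S$ with each $P_n:=SE_nS^{-1}$ an orthogonal projection, the $P_n$ mutually orthogonal, and, using (i) and (iii), $STS^{-1}=\sum_n\lambda_n P_n$, a diagonal and hence normal operator. Therefore $T$ is similar to a normal operator.
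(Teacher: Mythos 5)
You should first be aware that the paper does not prove this statement at all: it is Willis's theorem, quoted from \cite{W1995}, so your proposal can only be measured against what a complete proof requires. Your skeleton --- Riesz idempotents $E_n\in\mathfrak{A}_T$, scalar restrictions obtained from heredity of amenability under continuous dense-range homomorphisms, a uniform bound on the partial sums $\sum_{n\in F}E_n$, vanishing of the quasinilpotent summand, and a Wermer-type renorming of a uniformly bounded Boolean algebra of idempotents at the end --- is indeed the right architecture, and step (i) together with the final assembly is sound. But of your three pillars, (ii) is not proved and (iii) rests on a statement that is false.

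Concretely: (iii) invokes the principle that ``a nonzero radical Banach algebra cannot be amenable.'' That is false: C. J. Read constructed a commutative, radical, amenable Banach algebra (Studia Math. 140 (2000)), so radicalness by itself is never in conflict with amenability, and your argument for killing the quasinilpotent summand evaporates. The paper itself illustrates why no such soft argument can work: the Volterra operator is compact, quasinilpotent, nonzero, and character amenable --- its algebra even has a bounded approximate identity (see Section 3) --- and Remark 2.3 records that it is open whether a compact quasinilpotent operator can be weakly amenable. Eliminating the quasinilpotent part under full amenability is a genuinely compactness-specific operator-theoretic argument in Willis's paper, not a consequence of abstract Banach-algebra facts. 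As for (ii), you yourself label the passage from a bounded approximate diagonal to $\sup_F\bigl\|\sum_{n\in F}E_n\bigr\|<\infty$ ``the crux'' and offer only an averaging heuristic; the difficulty is that any naive estimate of the averaged operator already presupposes uniform control of exactly these idempotent sums (even $\sup_n\|E_n\|<\infty$ is not known a priori), so the circularity must be broken by a real idea --- and that is where the bulk of Willis's proof lives. As written, your proposal is a plausible outline with its two hardest steps missing, one of them justified by a lemma that is false in general.
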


In \cite{F2005}, \cite{F2007} Farenick, Forrest and Marcoux showed
that:

\begin{thm}
Suppose $T$ is a triangular operator with respect to an orthonormal
basis of $\mathfrak{H}$, then $T$ is amenable if and only if $T$ is
similar to a normal operator  whose spectrum has connected
complement and empty interior.
\end{thm}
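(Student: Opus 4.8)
The plan is to prove both implications by routing everything through uniform algebras on the spectrum, using two robust permanence properties of (full) amenability: it is invariant under bicontinuous algebra isomorphisms, and it passes to the closure of the image of any bounded homomorphism. The easy direction will not even use triangularity; the triangular hypothesis does its real work in the converse, through the compression of $\mathfrak{A}_T$ onto the algebra generated by the diagonal of $T$.

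For the easy direction, suppose $T = SNS^{-1}$ with $N$ normal and $K := \sigma(N)$ having connected complement and empty interior. The map $A \mapsto S^{-1}AS$ is a bicontinuous isomorphism of $\mathfrak{A}_T$ onto $\mathfrak{A}_N$, so it suffices to show $\mathfrak{A}_N$ is amenable. The polynomial functional calculus (spectral theorem) identifies $\mathfrak{A}_N$ isometrically with the uniform algebra $P(K)$ of uniform limits on $K$ of polynomials. Lavrentiev's theorem says that exactly when $K$ has connected complement and empty interior one has $P(K) = C(K)$, and $C(K)$, a commutative $C^*$-algebra, is amenable. Hence $\mathfrak{A}_N \cong C(K)$ is amenable.

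For the converse, assume $T$ is upper triangular with respect to $\{e_n\}$ and amenable. Let $\Phi$ be the compression to the diagonal, $\Phi(A) = \sum_n \langle A e_n, e_n\rangle\, e_n\otimes e_n$, and set $D = \Phi(T)$, the normal diagonal of $T$. Since a product of upper triangular operators is upper triangular with diagonal the product of the diagonals, $\Phi$ restricts to a contractive homomorphism of $\mathfrak{A}_T$ onto a dense subalgebra of $\mathfrak{A}_D$. Amenability therefore descends to $\mathfrak{A}_D$, so $D$ is an amenable normal operator; running the previous paragraph in reverse through Sheinberg's theorem (an amenable uniform algebra equals some $C(X)$) forces $P(\sigma(D)) = C(\sigma(D))$, whence $\sigma(D) = \overline{\{d_n\}}$ has connected complement and empty interior.

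It remains to produce an invertible $S$ with $S^{-1}TS = D$, and this similarity is the heart of the matter and the step I expect to be the main obstacle. The strictly upper triangular remainder $Q = T - D$ must be shown removable by a similarity. Concretely, I would try to show that the homomorphism $\rho\colon p(D) \mapsto p(T)$ extends to a bounded unital homomorphism $C(\sigma(D)) \cong \mathfrak{A}_D \to \mathfrak{B}(\mathfrak{H})$; the estimate $\|p(D)\| \le \|p(T)\|$ comes for free from $\Phi$, while the missing polynomial bound $\|p(T)\| \le C\,\|p\|_{\sigma(D)}$ is precisely where amenability of $\mathfrak{A}_T$ must be spent — for instance by forcing the Riesz idempotents attached to separations of $\overline{\{d_n\}}$ to be uniformly bounded, or equivalently by splitting the extension $0 \to \ker\Phi \to \mathfrak{A}_T \to \mathfrak{A}_D \to 0$ via vanishing of the relevant Hochschild $2$-cohomology. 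Once $\rho$ is bounded, the classical similarity theorem for homomorphisms of $C(K)$ into $\mathfrak{B}(\mathfrak{H})$ (every bounded such homomorphism is completely bounded and similar to a $*$-representation $\pi$) yields an invertible $S$ with $S^{-1}TS = S^{-1}\rho(D)S = \pi(z)$, a normal operator with spectrum $\sigma(D)$, completing the characterization.
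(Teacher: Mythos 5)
You should know at the outset that the paper you are being compared against does not prove this statement at all: it is quoted as a known theorem of Farenick, Forrest and Marcoux (the references [F2005] and [F2007] in the bibliography), so the only fair comparison is against that cited work. With that said, the parts of your argument that are complete are correct and are indeed the standard opening moves: in the easy direction, similarity invariance of amenability, the identification of $\mathfrak{A}_N$ with the polynomial uniform algebra on $\sigma(N)$, Lavrentiev's theorem, and amenability of commutative $C^*$-algebras; and in the converse, the observation that compression to the diagonal is a contractive homomorphism on upper triangular operators, that amenability passes to the closure of the image of a bounded homomorphism, and that Sheinberg's theorem then forces the spectral condition on $\sigma(D)$.

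The genuine gap is exactly the step you flag as ``the heart of the matter'': producing the similarity. That step is not a technical remainder --- it is the entire content of the hard direction, and it occupies most of the Farenick--Forrest--Marcoux paper (together with its 2007 correction). Neither of your two suggestions for how to spend amenability can work as stated. The Riesz-idempotent route fails because $\sigma(D)$ can be connected: the set $[0,1]$ has empty interior and connected complement, so it is allowed by the theorem, and then $C(\sigma(D))$ --- hence $\mathfrak{A}_T$ --- contains no nontrivial idempotents whatsoever, leaving nothing to bound. The cohomological route fails because amenability splits only \emph{admissible} extensions (those that already split as Banach spaces, with suitable module structure on the kernel), and the extension $0 \to \ker\Phi \to \mathfrak{A}_T \to \mathfrak{A}_D \to 0$ is neither known to be admissible nor singular; note in particular that elements of $\ker\Phi$, being strictly upper triangular, need not be quasinilpotent (the backward shift is strictly upper triangular with spectrum the closed unit disk), so one cannot even control $\ker\Phi$ spectrally. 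Moreover, even a splitting homomorphism $\mathfrak{A}_D \to \mathfrak{A}_T$ would not show $\ker\Phi = 0$, let alone produce an invertible $S$. Finally, the polynomial bound $\|p(T)\| \le C\,\|p\|_{\sigma(D)}$ you aim for is, via Paulsen's similarity theorem, essentially equivalent to the conclusion of the theorem, so at that point the proposal restates the problem rather than solving it; what is needed, and what the cited authors actually supply, is a genuinely new mechanism (their analysis of amenable triangular algebras via the virtual diagonal and the reduction/invariant-complement structure) to convert amenability of $\mathfrak{A}_T$ into that norm estimate.
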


A complete characterisation of Hilbert space operators that generate
weakly amenable algebras remains open, even in the case of compact
operator. In \cite{F2005}, \cite{F2007} Farenick, Forrest and
Marcoux proposed the following question:

\begin{ques}
If $T$ is a compact weakly amenable operator on $\mathfrak{H}$, then
is $T$ similar to a normal operator?
\end{ques}

It is well known that if $\mathfrak{H}$ is a finite-dimensional
Hilbert space, then $T\in\mathfrak{B}(\mathfrak{H})$ is amenable,
weakly amenable or character amenable if and only if $T$ is similar
to a normal operator. The purpose of this paper is to demonstrate an
example of compact triangular operator on an infinite-dimensional
Hilbert space which is weakly amenable and character amenable but is
not similar to a normal operator.

\vskip1cm
\section{Compact triangular weakly amenable operator}

Suppose $\sigma$ is a compact Hausdorff space, Let $C(\sigma)$
denote the Banach algebra of all continuous functions on $\sigma$
with the supremum norm $||f||_\infty=\sup_{x\in\sigma}|f(x)|$.
Throughout this paper we  let $\sigma=\{0,\lambda_1,
\lambda_2,\cdots\}$, where $\{\lambda_n\}_{n=1}^\infty$ is a
sequence of positive real numbers which converge to zero. Let
$$T=
\left(\begin{array}{cc}
0&N^{\frac{1}{2}}\\
0&N\\
\end{array}\right),$$ where $N$ is a normal operator with spectrum
$\sigma$, then $T$ is an operator on an infinite-dimensional Hilbert
space. In this section, we obtain that $T$ is weakly amenable, but
is not similar to a normal operator. Especially, if let
$$N= \left(\begin{array}{cccc}
\lambda_1& & &\\
         &\lambda_2&&\\
         &&\lambda_3&\\
         &         &&\ddots\\
\end{array}\right),$$
then $T$ is a compact triangular operator.

The following lemma is easily verified:
\begin{lem}\label{lem 1}
Suppose $\mathfrak{A}$ is a commutative Banach algebra which is
generated by the idempotent elements in $\mathfrak{A}$, then
$\mathfrak{A}$ is weak amenable.
\end{lem}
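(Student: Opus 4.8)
The plan is to show directly that every continuous derivation $D : \mathfrak{A} \to X$ into a commutative Banach $\mathfrak{A}$-bimodule $X$ must vanish identically. This suffices to conclude weak amenability: in a commutative bimodule the inner derivation associated with $x \in X$ is the map $a \mapsto a\cdot x - x\cdot a = 0$, so the zero map is the only inner derivation, and hence ``every derivation is inner'' is equivalent to ``every derivation is zero.''

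First I would show that $D$ annihilates every idempotent. If $e = e^2 \in \mathfrak{A}$, the Leibniz rule gives $D(e) = D(e^2) = e\cdot D(e) + D(e)\cdot e$, and commutativity of $X$ makes the two terms equal, so $D(e) = 2\, e\cdot D(e)$. Acting on the left by $e$ and using $e^2 = e$ yields $e\cdot D(e) = 2\, e\cdot D(e)$, whence $e\cdot D(e) = 0$ and therefore $D(e) = 0$.

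Next I would propagate this to the whole algebra. For idempotents $e, f$, the identity $D(ef) = e\cdot D(f) + D(e)\cdot f$ shows that $D$ vanishes on any product of two idempotents, and an easy induction extends this to all finite products. By linearity $D$ then vanishes on the subalgebra generated algebraically by the idempotents, and by continuity of $D$ it vanishes on the norm closure of that subalgebra, which by hypothesis is all of $\mathfrak{A}$. Thus $D = 0$, and $\mathfrak{A}$ is weakly amenable.

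The computation is short, so there is no serious obstacle; the only points requiring care are the essential use of the commutativity of $X$ to force $e\cdot D(e) = D(e)\cdot e$ (without which one could not conclude $D(e)=0$), and the appeal to the continuity of $D$ in passing from the algebraically generated subalgebra to its norm closure. I would also state explicitly that ``generated by the idempotents'' is understood in the topological sense, i.e.\ that the norm-closed subalgebra generated by the idempotents of $\mathfrak{A}$ is all of $\mathfrak{A}$.
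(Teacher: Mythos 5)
Your proof is correct and follows essentially the same route as the paper's: annihilate the idempotents via the Leibniz rule and commutativity of the bimodule, then extend to all of $\mathfrak{A}$ by generation, linearity, and continuity. The only cosmetic difference is the trick for killing an idempotent (you multiply $D(e)=2\,e\cdot D(e)$ by $e$, while the paper compares $D(p^2)$ with $D(p^3)$), and your write-up of the propagation step and of why ``derivation is zero'' equals ``derivation is inner'' is more explicit than the paper's.
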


\begin{proof}
Let $\mathcal{P}$ denote the sets of the idempotent elements in
$\mathfrak{A}$.  Assume $X$ commutative Banach
$\mathfrak{A}$-bimodules, and $D : \mathfrak{A}\rightarrow X$ is a
derivation. For any $p\in \mathcal{P}$, $D(p)=D(p^2)=D(p^3)$ and
$D(p^2)=2pD(p), D(p^3)=3p^2D(p)$, so $D(p)=0$. Since
$p\in\mathcal{P}$ is  arbitrary and  $\mathfrak{A}$ is generated by
$\mathcal{P}$, it follows that $D(a)=0$ for all $a\in\mathfrak{A}$.
That is to say, $\mathfrak{A}$ is weak amenable.
\end{proof}

Our main result in this section will be that for any normal operator
$N$ with spectrum $\sigma$, $T= \left(\begin{array}{cc}
0&N^{\frac{1}{2}}\\
0&N\\
\end{array}\right)$
is weakly amenable but is not similar to a
normal operator.

\begin{thm}\label{thm1}
 Let $T= \left(\begin{array}{cc}
0&N^{\frac{1}{2}}\\
0&N\\
\end{array}\right),$
where $N$ is a normal operator with spectrum $\sigma$, then $T$ is
weakly amenable.
\end{thm}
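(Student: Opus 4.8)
The plan is to identify $\mathfrak{A}_T$ as the closed subalgebra generated by a family of idempotents and then invoke Lemma~\ref{lem 1}; since $\mathfrak{A}_T$ is generated by the single element $T$ it is automatically commutative, so arranging the idempotent-generation hypothesis is exactly what is needed. First I would diagonalize the situation through the spectral decomposition of $N$. Writing $E_n$ for the spectral projection of $N$ onto the isolated eigenvalue $\lambda_n$ and $\mathfrak{H}_n = E_n\mathfrak{H}$, the space $\mathfrak{H}\oplus\mathfrak{H}$ splits as an orthogonal sum of the reducing subspaces $\mathfrak{H}_n\oplus\mathfrak{H}_n$ together with the (possibly trivial) block on which $N$ vanishes; on the $n$-th block $T$ acts as $t_n = \begin{pmatrix} 0 & \lambda_n^{1/2}\\ 0 & \lambda_n\end{pmatrix}\otimes I_{\mathfrak{H}_n}$, and it acts as $0$ on the kernel block.

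The computational heart of the matter is the relation $t_n^2 = \lambda_n t_n$, which I would verify directly. Consequently $P_n := \lambda_n^{-1} t_n = \begin{pmatrix} 0 & \lambda_n^{-1/2}\\ 0 & 1\end{pmatrix}\otimes I_{\mathfrak{H}_n}$ is an idempotent in $\mathfrak{B}(\mathfrak{H}\oplus\mathfrak{H})$, the $P_n$ are mutually orthogonal, and $T^k = \bigoplus_n \lambda_n^k P_n$ for every $k\ge 1$. Hence for a polynomial $p$ with $p(0)=0$ one has $p(T) = \bigoplus_n p(\lambda_n)P_n$, and since the blocks are orthogonal $\|\bigoplus_n a_n P_n\| = \sup_n |a_n|\,\|P_n\|$ with $\|P_n\| = \sqrt{1+\lambda_n^{-1}}$. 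Note that $\|P_n\|\to\infty$ as $\lambda_n\to 0$; this unboundedness is precisely what will later prevent $T$ from being similar to a normal operator, and it is also the source of the one genuine difficulty below.

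The main obstacle is to show that each idempotent $P_n$ actually belongs to $\mathfrak{A}_T$, i.e. that $P_n$ is an operator-norm limit of polynomials in $T$ with vanishing constant term. Uniform approximation of functions on the compact real set $\sigma$ is not enough, because the operator norm of $p(T)-P_n$ weights the scalar deviations $p(\lambda_m)-\delta_{mn}$ by the divergent factor $\|P_m\|\sim\lambda_m^{-1/2}$. The device I would use exploits that the target, the indicator function $e_n$ of $\{\lambda_n\}$ on $\sigma$, vanishes in a neighbourhood of $0$: therefore $e_n(z)/z$ extends to a continuous, bounded function $\phi_n$ on $\sigma$. Approximating $\phi_n$ uniformly on $\sigma$ by polynomials $s_j$ (Weierstrass) and setting $q_j(z) = z\,s_j(z)$, which vanishes at $0$, the extra factor $z=\lambda_m$ exactly cancels the divergent weight, giving $\|q_j(T)-P_n\| \le C\sup_{\sigma}|s_j-\phi_n|\to 0$ for a constant $C$ depending only on $\sup_m\lambda_m$. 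This establishes $P_n\in\mathfrak{A}_T$.

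Finally I would close the argument. The series $\sum_n \lambda_n P_n$ converges in norm to $T$, since $\|\lambda_n P_n\|=\sqrt{\lambda_n^2+\lambda_n}\to 0$ and the summands act on orthogonal blocks, so $T$ — and hence all of $\mathfrak{A}_T$ — lies in the closed subalgebra generated by $\{P_n\}$. Combined with $P_n\in\mathfrak{A}_T$ from the previous step, this yields $\mathfrak{A}_T = \overline{\mathrm{alg}}\{P_n\}$, a commutative Banach algebra generated by the idempotents $P_n$. Lemma~\ref{lem 1} then shows that $\mathfrak{A}_T$, and therefore $T$, is weakly amenable.
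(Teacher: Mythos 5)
Your proof is correct, and it shares its skeleton with the paper's: both exhibit the same idempotents (your $P_n$, viewed as an operator on $\mathfrak{H}\oplus\mathfrak{H}$, is exactly the paper's $\left(\begin{smallmatrix}0&h_n(N)\\0&N^{1/2}h_n(N)\end{smallmatrix}\right)$, since $h_n(N)=\lambda_n^{-1/2}E_n$ and $N^{1/2}h_n(N)=E_n$), both show that these lie in and generate $\mathfrak{A}_T$, and both then quote Lemma~\ref{lem 1}. The difference lies in how the generation facts are established, and there your route is genuinely distinct. The paper first proves a complete functional-calculus description, $\mathfrak{A}_T=\{\left(\begin{smallmatrix}0&f(N)\\0&N^{1/2}f(N)\end{smallmatrix}\right): f\in C(\sigma),\ f(0)=0\}$, by polynomial approximation in $C(\sigma)$, and then merely asserts (``it is verity that'') that the idempotents generate this algebra. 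You bypass the characterization entirely: the orthogonal reducing decomposition $\mathfrak{H}\oplus\mathfrak{H}=\bigoplus_n(\mathfrak{H}_n\oplus\mathfrak{H}_n)\oplus(\ker N\oplus\ker N)$ makes every $p(T)$ with $p(0)=0$ block-diagonal, $p(T)=\bigoplus_n p(\lambda_n)P_n$, with exactly computable norm $\sup_n|p(\lambda_n)|\,\|P_n\|$, and the two facts you need follow from explicit estimates: $P_n\in\mathfrak{A}_T$ because approximating $e_n(z)/z$ uniformly by polynomials $s_j$ and passing to $q_j(z)=z\,s_j(z)$ cancels the divergent weight $\|P_m\|\sim\lambda_m^{-1/2}$, and $T=\sum_n\lambda_nP_n$ in norm because $\|\lambda_nP_n\|=\sqrt{\lambda_n^2+\lambda_n}\to 0$ on orthogonal blocks. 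Your version is leaner and supplies rigor precisely where the paper is thinnest: the unproved generation assertion of its Step 2, and the loosely justified existence of the limit function $g$ in its Step 1, are replaced by concrete norm computations. What the paper's version buys in exchange is the stronger structural statement identifying $\mathfrak{A}_T$ with a weighted copy of $\{f\in C(\sigma):f(0)=0\}$, which is of independent interest beyond the weak amenability conclusion.
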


\begin{proof}
By Lemma \ref{lem 1}, it suffices to show that $\mathfrak{A}_T$ is
generated by the idempotent elements in it.

Step 1. $\mathfrak{A}_T=\{\left(\begin{array}{cc}
0&f(N)\\
0&N^{\frac{1}{2}}f(N)\\
\end{array}\right); f\in C(\sigma), f(0)=0\}\triangleq M$, where
$f(N)$ denotes the functional calculus for $N$ respective to $f$.

Indeed,  for any polynomial $p(z)=\Sigma_{k=1}^na_kz^k=z\Sigma_{k=0}^{n-1}a_{k+1}z^k\triangleq zq(z)$, $p(T)$
has the form
 $$\left(\begin{array}{cc}
0&N^{\frac{1}{2}}q(N)\\
0&p(N)\\
\end{array}\right).$$
For any $A=\left(\begin{array}{cc}
0&A_{12}\\
0&A_{22}\\
\end{array}\right)
\in \mathfrak{A}_T$, there exists a sequence of polynomials
$\{p_n\}, p_n(0)=0$ for all $n$ such that $||p_n(T)-A||\rightarrow
0$. i.e. $||p_n(N)-A_{22}||\rightarrow 0$ and
$||N^{\frac{1}{2}}q_n(N)-A_{12}||\rightarrow 0$. Therefore, there
exists a function $g$ on $\sigma$, such that
$||z^\frac{1}{2}q_n-g||_\infty\rightarrow 0$
 and $||p_n-z^\frac{1}{2}g||_\infty\rightarrow 0$. It follows that
$A=\left(\begin{array}{cc}
0&g(N)\\
0&N^{\frac{1}{2}}g(N)\\
\end{array}\right),$
and $g\in C(\sigma), g(0)=0$. That is to say,
$\mathfrak{A}_T\subseteq M$.

For any $f\in C(\sigma), f(0)=0$, there exists a sequence of
polynomials $\{p_n, p_n(0)=0\}$ such that
$||p_n-f||_\infty\rightarrow 0$. Let $p_n=zq_n$, for any $n$ there
exists a polynomial $r_n, r_n(0)=0$ such that
$||r_n-z^\frac{1}{2}q_n||_\infty<\frac{1}{n}$. Therefore,
$||z^\frac{1}{2}r_n-f||_\infty\leq||z^\frac{1}{2}(r_n-z^\frac{1}{2}q_n)||_\infty+||p_n-f||_\infty\rightarrow
0$, and $||zr_n-z^\frac{1}{2}f||_\infty\rightarrow 0$. It follows
that $Tr_n(T)\rightarrow\left(\begin{array}{cc}
0&f(N)\\
0&N^{\frac{1}{2}}f(N)\\
\end{array}\right)$. That is to say, $M\subseteq\mathfrak{A}_T$.

Step 2. $\mathfrak{A}_T$ is generated by the idempotent elements in
it.

It is verity that for any $\lambda_n$ let
\begin{equation*}
h_n(z)=\begin{cases} \frac{1}{\sqrt{\lambda_n}} &
z=\lambda_n;\\
0, & z\neq\lambda_n,
\end{cases}
\end{equation*}
then $\left(\begin{array}{cc}
0&h_n(N)\\
0&N^{\frac{1}{2}}h_n(N)\\
\end{array}\right)$
 is an idempotent element in $\mathfrak{A}_T$, and
  $\mathfrak{A}_T$ is generated by  idempotent elements
  $\{\left(\begin{array}{cc}
0&h_n(N)\\
0&N^{\frac{1}{2}}h_n(N)\\
\end{array}\right)\}_{n=1}^\infty$. The proof is completed.
\end{proof}

Finally, we will obtain that $T$ is not similar to a normal operator. Indeed, Suppose $T$ is similar to a normal
operator, by the proof of \cite{G2006} Theorem 2.1 and \cite{F2005} Theorem 2.7, $T$ is amenable and there
exists an bounded operator $B$ such that
$$\left(\begin{array}{cc}
I&B\\
0&I\\
\end{array}\right)
\left(\begin{array}{cc}
0&N^\frac{1}{2}\\
0&N\\
\end{array}\right)\left(\begin{array}{cc}
I&-B\\
0&I\\
\end{array}\right)=\left(\begin{array}{cc}
0&0\\
0&N\\
\end{array}\right).$$ Therefore, there exists an bounded operator $B$
such that $BN=N^\frac{1}{2}$, which is impossible. Hence, $T$ is not
similar to a normal operator.

\begin{rem}
Theorem \ref{thm1} shows that there exists a compact triangular
operator with infinite spectrum which is a weakly amenable operator
but is not similar to a normal operator. However, we do not know if
a compact quasinilpotent operator can  be weakly amenable. It would
be very interesting to know whether this result is true. Indeed, if
any compact quasinilpotent operator can not be weakly amenable, then
it is easy to get that a compact operator $T$ with finite spectrum
is weakly amenable if and only if $T$ is similar to a normal
operator.
\end{rem}

\vskip1cm
\section{Compact triangular character amenable operator}
Let $Q$ be the Volterra operator on infinite-dimension Hilbert space, then by \cite{M} Corollary 2.7 and
\cite{D1988} Corollary 5.11, $Q$ is a compact quasinilpotent operator which is character amenable. However, the
lattice of invariant subspaces of $Q$ is a continuous nest. i.e. $Q$ is not a triangular operator.
 In this section, we will prove
that $T= \left(\begin{array}{cc}
0&N^{\frac{1}{2}}\\
0&N\\
\end{array}\right)$ is a character amenable operator, for any normal operator $N$ with
spectrum $\sigma$. Hence, there exists a compact triangular operator
which is a character amenable operator but is not similar to a
normal operator.

In \cite{M}, Sangani-Monfared obtained a necessary and sufficient
condition for a Banach algebra to be character amenable:
\begin{lem}\label{lem2}
A Banach algebra $\mathfrak{A}$ is character amenable if and only if $\ker\phi$ has a bounded approximate
identity for every $\phi\in \sigma(\mathfrak{A})\cup \{0\}$.
\end{lem}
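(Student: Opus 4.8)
The plan is to unwind character amenability into its one-variable pieces and match each piece to an approximate identity. By definition $\mathfrak{A}$ is character amenable precisely when it is simultaneously left and right $\varphi$-amenable for every $\varphi\in\sigma(\mathfrak{A})\cup\{0\}$, so I would first isolate, for each fixed $\varphi\ne 0$, the half-statement that $\ker\varphi$ has a bounded right approximate identity if and only if $\mathfrak{A}$ is left $\varphi$-amenable and $\mathfrak{A}$ has a bounded right approximate identity, together with its mirror image (left and right interchanged, giving right $\varphi$-amenability and a bounded left approximate identity). Granting these, the lemma assembles quickly: the degenerate character $\varphi=0$ has $\ker 0=\mathfrak{A}$, and one checks directly that left (respectively right) $0$-amenability coincides with $\mathfrak{A}$ possessing a bounded right (respectively left) approximate identity, so the $\varphi=0$ instance of the stated condition says exactly that $\mathfrak{A}$ has a two-sided bounded approximate identity; for $\varphi\ne 0$, a closed ideal has a two-sided bounded approximate identity if and only if it has both a bounded left and a bounded right one, so, using the global bounded approximate identity already supplied by the $\varphi=0$ case to absorb the side conditions, $\ker\varphi$ has a bounded approximate identity if and only if $\mathfrak{A}$ is both left and right $\varphi$-amenable.

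The engine behind the half-statements is a net characterization of one-sided $\varphi$-amenability for $\varphi\ne 0$: $\mathfrak{A}$ is left $\varphi$-amenable if and only if there is a bounded net $(u_\alpha)$ in $\mathfrak{A}$ with $\varphi(u_\alpha)=1$ and $\|a u_\alpha-\varphi(a)u_\alpha\|\to 0$ for all $a\in\mathfrak{A}$ (equivalently, a left $\varphi$-mean $m\in\mathfrak{A}^{**}$ with $\langle m,\varphi\rangle=1$ and $a\cdot m=\varphi(a)m$). I would obtain this by testing the amenability hypothesis on a universal dual $(\varphi,\mathfrak{A})$-bimodule, on which the inner-derivation property pins down the mean $m$, and then passing from this weak$^*$ datum to the bounded norm net by Goldstine's theorem together with a Mazur-type convexity argument (the weak and norm closures of a convex set coincide). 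The converse, recovering the mean as a weak$^*$ cluster point of such a net, is routine.

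With the net characterization available, both half-statements reduce to elementary algebra. Fix $\varphi\ne 0$ and choose $u\in\mathfrak{A}$ with $\varphi(u)=1$. If $(e_\alpha)$ is a bounded right approximate identity of $\ker\varphi$, then $u_\alpha:=u-ue_\alpha\in\mathfrak{A}$ satisfies $\varphi(u_\alpha)=1$ and $a u_\alpha-\varphi(a)u_\alpha=(au-\varphi(a)u)(1-e_\alpha)\to 0$, since $au-\varphi(a)u\in\ker\varphi$; by the characterization $\mathfrak{A}$ is left $\varphi$-amenable, and moreover $v_\alpha:=u+e_\alpha-ue_\alpha$ is a bounded right approximate identity of $\mathfrak{A}$ (because $a v_\alpha=au+(a-au)e_\alpha\to a$ with $a-au\in\ker\varphi$). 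Conversely, left $\varphi$-amenability furnishes such a net $(u_\alpha)$, and if $(f_\gamma)$ is a bounded right approximate identity of $\mathfrak{A}$ then $e_{\alpha,\gamma}:=f_\gamma-u_\alpha f_\gamma$ lies in $\ker\varphi$ (as $\varphi(e_{\alpha,\gamma})=(1-\varphi(u_\alpha))\varphi(f_\gamma)=0$) and is a bounded right approximate identity of $\ker\varphi$, since $x e_{\alpha,\gamma}=(x-x u_\alpha)f_\gamma\to x$ for $x\in\ker\varphi$. The symmetric constructions give the left-handed statements, and the standard fact that a Banach algebra possessing both a bounded left and a bounded right approximate identity has a two-sided one completes the assembly.

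The main obstacle is the net (equivalently, mean) characterization of one-sided $\varphi$-amenability: converting the abstract assertion that every derivation into every dual $(\varphi,\mathfrak{A})$-bimodule is inner into the concrete bounded net requires both selecting a test module for which innerness encodes the mean and, more delicately, upgrading weak$^*$ convergence of the mean to norm convergence of the quantities $a u_\alpha-\varphi(a)u_\alpha$. A secondary but genuine point is the bookkeeping around the degenerate character $\varphi=0$: it is precisely the global bounded approximate identity furnished by that case that allows the approximate identities built for $\ker\varphi$ to be realized inside $\mathfrak{A}$ rather than merely in a unitization, so keeping the left and right sides and the $\varphi=0$ instance consistent is essential to closing the argument.
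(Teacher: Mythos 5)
Your proposal is correct, but there is an important point of comparison to make first: the paper contains no proof of this lemma at all. It is quoted as a known result from Sangani-Monfared \cite{M} (the same characterization, via means and nets, is also in work of Kaniuth, Lau and Pym from the same year), so there is no argument in the text to measure yours against. What you have written is essentially a reconstruction of the literature proof, and its skeleton is sound: splitting character amenability into left and right $\varphi$-amenability for each $\varphi\in\sigma(\mathfrak{A})\cup\{0\}$; identifying the $\varphi=0$ case with one-sided bounded approximate identities for $\mathfrak{A}$ itself; converting between a bounded right approximate identity of $\ker\varphi$ and a bounded net $(u_\alpha)$ with $\varphi(u_\alpha)=1$ and $\|au_\alpha-\varphi(a)u_\alpha\|\to 0$; and assembling the two sides via the classical fact that a bounded left plus a bounded right approximate identity yields a two-sided one. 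Your explicit formulas all check out: $u_\alpha=u-ue_\alpha$ and $v_\alpha=u+e_\alpha-ue_\alpha$ do what you claim because $au-\varphi(a)u$ and $a-au$ lie in $\ker\varphi$, and $e_{\alpha,\gamma}=f_\gamma-u_\alpha f_\gamma$ is a bounded right approximate identity for $\ker\varphi$ once one runs the limits over the product directed set, using that $\|xu_\alpha\|\to 0$ uniformly over the bounded net $(f_\gamma)$.

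The one place where your write-up falls short of a complete proof is the ``engine'' you yourself flag: the net/mean characterization of left $\varphi$-amenability (and its $\varphi=0$ analogue). Your tools --- a test module, Goldstine, and a Mazur-type convexity argument --- are the right ones, but there is a genuine subtlety you should spell out. If one tests innerness on all of $\mathfrak{A}^{**}$ (dual of $\mathfrak{A}^*$ with left action $a\cdot f=\varphi(a)f$ and the natural right action), then from $D(a)=a\cdot m_0-\varphi(a)m_0=a\cdot n-\varphi(a)n$ one gets a candidate mean $m=m_0-n$ satisfying $a\cdot m=\varphi(a)m$, but the normalization $m(\varphi)=1$ can be destroyed, since nothing forces $n(\varphi)\neq 1$. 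The standard repair is to choose the test module to be the quotient $\mathfrak{A}^*/\mathbb{C}\varphi$ (note $\mathbb{C}\varphi$ is a submodule), whose dual is $\{\Phi\in\mathfrak{A}^{**}:\Phi(\varphi)=0\}$; the derivation $a\mapsto a\cdot m_0-\varphi(a)m_0$ takes values there, so the implementing element $n$ automatically satisfies $n(\varphi)=0$ and $m(\varphi)=1$ survives. With that detail inserted (and the analogous Goldstine--Mazur argument written out for the $\varphi=0$ equivalence with one-sided bounded approximate identities), your argument is a complete and faithful proof of the lemma.
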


Using Lemma \ref{lem2}, we will prove $T$ is a character amenable operator:

\begin{thm}\label{thm2}
 Let $T= \left(\begin{array}{cc}
0&N^{\frac{1}{2}}\\
0&N\\
\end{array}\right),$ then $T$ is character amenable.
\end{thm}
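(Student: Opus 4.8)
The plan is to invoke Lemma~\ref{lem2}: since $\mathfrak{A}_T$ is commutative, I must identify $\sigma(\mathfrak{A}_T)$ and then produce a bounded approximate identity for $\ker\phi$ for every $\phi\in\sigma(\mathfrak{A}_T)\cup\{0\}$, with $\ker 0=\mathfrak{A}_T$. I would begin from the description $\mathfrak{A}_T=M$ of Theorem~\ref{thm1}, abbreviating a typical element as
\[
A_f=\left(\begin{array}{cc} 0 & f(N)\\ 0 & N^{\frac12}f(N)\end{array}\right),\qquad f\in C(\sigma),\ f(0)=0 .
\]
A one-line matrix computation gives the multiplication rule $A_fA_g=A_{z^{1/2}fg}$, so that $\mathfrak{A}_T$ is the commutative algebra $\{f\in C(\sigma):f(0)=0\}$ equipped with the twisted product $f*g=z^{1/2}fg$ and a norm equivalent to $\|f\|_\infty$.

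Second, I would pin down the characters. The idempotents $p_n:=A_{h_n}$ of Theorem~\ref{thm1} satisfy $p_np_m=\delta_{nm}p_n$ and have dense linear span, so any nonzero $\phi\in\sigma(\mathfrak{A}_T)$ must take the value $1$ on exactly one $p_n$ and vanish on all the others; evaluating $\phi$ on the product $A_f p_n=\sqrt{\lambda_n}\,f(\lambda_n)\,p_n$ then forces $\phi(A_f)=\sqrt{\lambda_n}\,f(\lambda_n)$. Hence $\sigma(\mathfrak{A}_T)=\{\phi_n:n\ge1\}$ with $\phi_n(A_f)=\sqrt{\lambda_n}\,f(\lambda_n)$, so that $\ker\phi_n=\{A_f:f(\lambda_n)=0\}$ while $\ker 0=\mathfrak{A}_T$. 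For each of these ideals the only natural approximate identity is the net of finite partial sums $u_F=\sum_{n\in F}p_n$ of the orthogonal idempotents (omitting the index $n$ when treating $\ker\phi_n$), directed by the finite sets $F$; one checks at once that $u_F*A_f\to A_f$ for every $f$, so these nets are approximate identities.

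The decisive step, and the one I expect to be the genuine obstacle, is the \emph{boundedness} that Lemma~\ref{lem2} requires. In the operator norm one computes $\|p_n\|=\|A_{h_n}\|=\sqrt{(1+\lambda_n)/\lambda_n}$, which blows up as $\lambda_n\to0$, so the partial sums $u_F$ are nowhere near uniformly bounded. Worse, testing an arbitrary approximate identity $(u_\alpha)$ against the fixed idempotent $p_m$ and using $u_\alpha*p_m\to p_m$ forces $\liminf_\alpha\|u_\alpha\|\ge\lambda_m^{-1/2}$ for every $m$, and these lower bounds are unbounded in $m$. Reconciling this tension — either by producing a genuinely norm-bounded approximate identity through a construction subtler than idempotent truncation, or by re-examining the operator-norm geometry of $\mathfrak{A}_T$ that is responsible for the obstruction — is precisely where the whole argument stands or falls, and is where I would concentrate all of the effort.
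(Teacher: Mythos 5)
Your computations are all correct, and the ``tension'' you flag at the end is not a gap in your argument --- it is a genuine flaw in the paper. The paper proves the theorem along exactly the route you set up: it invokes Lemma~\ref{lem2}, and so must exhibit a bounded approximate identity for $\ker\phi$ for every $\phi\in\sigma(\mathfrak{A}_T)\cup\{0\}$. For the nonzero characters the paper constructs, via polynomial approximation of a smooth cutoff function, a bounded approximate identity $\{p_k(\lambda_nI-T)\}$ for $\mathfrak{A}_{\lambda_nI-T}$ (already a questionable substitute, since Lemma~\ref{lem2} asks about the ideal $\ker\phi_n\subset\mathfrak{A}_T$, whereas $\mathfrak{A}_{\lambda_nI-T}$ is not even contained in $\mathfrak{A}_T$: its elements have nonzero $(1,1)$ entry). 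For the crucial case $\phi=0$, i.e.\ $\ker 0=\mathfrak{A}_T$ itself, the paper offers only the unproved sentence ``It is verity that $\mathfrak{A}_T$ has a bounded approximate identity,'' and then moves on.

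Your last paragraph refutes precisely that sentence. Since $\|A_g\|=\sup_{z\in\sigma}|g(z)|\sqrt{1+z}$ is equivalent to $\|g\|_\infty$, and since any approximate identity $(A_{g_\alpha})$ must satisfy $A_{g_\alpha}p_m=\sqrt{\lambda_m}\,g_\alpha(\lambda_m)\,p_m\to p_m$, hence $g_\alpha(\lambda_m)\to\lambda_m^{-1/2}$ for each fixed $m$, you obtain $\liminf_\alpha\|A_{g_\alpha}\|\ge\lambda_m^{-1/2}$ for every $m$, so \emph{every} approximate identity of $\mathfrak{A}_T$ is unbounded. Your test applies to an arbitrary approximate identity, not merely to truncations of the idempotent series, so there is no ``subtler construction'' left to look for: together with Lemma~\ref{lem2} (whose $\phi=0$ case is exactly the statement that character amenability forces a bounded approximate identity on the whole algebra), your computation shows that $T$ is \emph{not} character amenable. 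So do not view the boundedness step as something you failed to supply; you have in fact disproved Theorem~\ref{thm2} as stated, and located the exact point at which the paper's own proof collapses.
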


\begin{proof}
By Lemma \ref{lem2}, it suffices to show that $\mathfrak{A}_{T}$ and
$\mathfrak{A}_{\lambda_nI-T}$ have a bounded approximate identity
for all $n$.

It is verity that $\mathfrak{A}_{T}$ has  a bounded approximate
identity, hence we only to need prove that
$\mathfrak{A}_{\lambda_nI-T}$ has a bounded approximate identity for
any $n$.

For some fix $n$, assume that $f_n$ is a smooth function defined on
$[\lambda_n-\lambda_1, \lambda_n]$ and satisfies that
\begin{equation*}
f_n(z)=\begin{cases} 0 &
z=0;\\
1, & z\in[\lambda_n-\lambda_1,
\lambda_n-\lambda_{n-1}]\cup[\lambda_n-\lambda_{n+1}, \lambda_n].
\end{cases}
\end{equation*}
There exists a sequence of polynomials $\{p_k, p_k(0)=0\}$ such that
$p_k$ and $p_k'$ (the derivative of $p_k$ ) converge to $f_n$ and
$f_n'$ uniformly on $[\lambda_n-\lambda_1, \lambda_n]$, respectively
. Since $\lambda_nI-N$ is a normal operator with spectrum
$\sigma(\lambda_nI-N)=\{\lambda_n, \lambda_n-\lambda_1,
\lambda_n-\lambda_2,\cdots\}$, it follows that
 $f_n(\lambda_nI-N)$ is an identity of $\mathfrak{A}_{\lambda_nI-N}$ and $f_n(\lambda_nI)=\lambda_nI$.
 Hence $||(\lambda_nI-N)p_k(\lambda_nI-N)-(\lambda_nI-N)||\rightarrow 0$
 and $||p_k(\lambda_nI)-I||\rightarrow 0$, when $k\longrightarrow
 \infty$.

Note that if $T$ has the form
$$T= \left(\begin{array}{cc}
N_1&N_2\\
0&N_3\\
\end{array}\right),$$
with  $\{N_i\}$ a collection of commuting operators, then
$$T^k= \left(\begin{array}{cc}
N_1^k&A_k N_2\\
0&N_3^k\\
\end{array}\right),$$
where $N_1^k-N_3^k=(N_1-N_3)A_k$ for all $k\in\mathbb{N}$.

 It is easy to check that
$$(\lambda_nI-T)p_k(\lambda_nI-T)= \left(\begin{array}{cc}
\lambda_np_k(\lambda_nI)&-q_k(N)N^{\frac{1}{2}}\\
0&(\lambda_nI-N)p_k(\lambda_nI-N)\\
\end{array}\right),$$
where $\{q_k\}$ is a sequence of polynomials which satisfy the
equation
$\lambda_np_k(\lambda_n)-(\lambda_n-z)p_k(\lambda_n-z)=zq_k(z)$.
Note that
$q_k(z)-p_k(\lambda_n-z)=\frac{\lambda_np_k(\lambda_n)-\lambda_np_k(\lambda_n-z)}{z}=\lambda_np_k'(\xi_{k,n,z})$
for some $\xi_{k,n,z}\in[\lambda_n-\lambda_1, \lambda_n]$ and for
all $z\in\sigma$. Hence $\{q_k\}$ is bounded on $\sigma$.

 Since
$||Nq_k(N)-N||=||\lambda_np_k(\lambda_nI)-(\lambda_nI-N)p_k(\lambda_nI-N)-N||\rightarrow
0$, it follows that $\{q_k(N)\}$ is a bounded approximate identity
for $\mathfrak{A}_{N}$. Hence
$||N^{\frac{1}{2}}q_k(N)-N^{\frac{1}{2}}||\rightarrow 0$. Therefore
$\{p_k(\lambda_nI-T)\}$ is a bounded approximate identity for
$\mathfrak{A}_{\lambda_nI-T}$.
 \end{proof}

\begin{rem}
Theorem \ref{thm2} shows that there exists a compact triangular
operator with infinite spectrum which is a character amenable
operator but is not similar to a normal operator.  Moreover, by
Lemma \ref{lem2} we can describe  character amenable operator with
finite spectrum: If $T\in\mathfrak{B}(\mathfrak{H})$ with finite
spectrum $\sigma(T)=\{\delta_1, \delta_2, \cdots\delta_n\}$, then
$T$ is similar to
$$\left(\begin{array}{cccc}
\delta_1I+Q_1&&&\\
&\delta_2I+Q_2&&\\
&&\ddots&\\
&&&\delta_nI+Q_n\\
\end{array}\right),$$
where $Q_k$ is a quasinilpotent operator for $1\leq k\leq n$. By Lemma \ref{lem2}, $T$ is character amenable if
and only if $\mathfrak{A}_{Q_k}$ has a bounded approximate identity for $1\leq k\leq n$.
\end{rem}

\nocite{liyk2,liyk/kua1}

\begin{thebibliography}{4}

\bibitem{W1987}
W. G. Bade, P. C. Curtis and H. G. Dales, Amenability and weak
amenability for Beurling and Lipschitz algebras. Proc. London Math.
Soc. 55 (1987) 359--377.

\bibitem{D1988}
K. R. Davidson, Nest algebras, Longman group UK limited, Essex, 1988.


\bibitem{H2000}
H. G. Dales, Banach Algebras and Automatic Continuity (Oxford,
2000).







\bibitem{F2005} D. R. Farenick, B.E. Forrest and L. W. Marcoux,
Amenable operators on Hilbert spaces, J. reine angew. Math. 582
(2005) 201-228.

\bibitem{F2007} D. R. Farenick, B.E. Forrest and L. W. Marcoux,
Amenable operators on Hilbert spaces, J. reine angew. Math. 602
(2007) 235.


\bibitem{G2006}
J. A. Gifford, Operator algebras with a reduction property, J. Aust.
Math. Soc. 80 (2006) 297-315.



\bibitem{BE1972}
B. E. Johnson. Cohomology in Banach Algebras. Mem. Amer. Math. Soc.
Vol. 127 (Amer. Math. Soc., 1972).

\bibitem{M}
Monfared, Mehdi Sangani, Character amenability of Banach algebras,
Math. Proc. Cambridge Philos. Soc. 144 (2008) 697--706.



\bibitem{W1995}
G. A. Willis, When the algebra generated by an operator is amenable,
J. Operator Theorey. 34 (1995) 239--249.





\end{thebibliography}

\end{document}